  \renewcommand{\headrulewidth}{0mm}
\theoremstyle{plain}
\newtheorem{theorem}{Theorem}
\newtheorem{corollary}[theorem]{Corollary}
\newtheorem{proposition}[theorem]{Proposition}
\newtheorem{lemma}[theorem]{Lemma}
\theoremstyle{definition}
\newtheorem{definition}[theorem]{Definition}
\newtheorem{example}[theorem]{Example}
\theoremstyle{remark}
\newtheorem{remark}[theorem]{Remark}
\newcommand{\numberofauthors}{}
\newcommand{\authorAfullname}{}
\newcommand{\authorAindexname}{}
\newcommand{\authorAaddress}{}
\newcommand{\authorBfullname}{}
\newcommand{\authorBaddress}{}
\newcommand{\authorCfullname}{}
\newcommand{\authorCaddress}{}
\newcommand{\authorDfullname}{}
\newcommand{\authorDaddress}{}
\newcommand{\papertitle}{}
\newcommand{\processpaperdata}{%
  \setcounter{section}{0}
  \renewcommand{\leftmark}{%
    \ifcase\numberofauthors
      {}\or
      {\authorAfullname}\else
      {\authorAfullname{} et al.}
    \fi
  }
  \vspace*{3pt}
  \begin{center}{\bfseries\huge\papertitle}\end{center}
  \ifcase\numberofauthors
    {}\or
    {\begin{center}
      \begin{minipage}{50mm}\centering\authorAfullname\\\authorAaddress\end{minipage}
    \end{center}}\or
    {\begin{center}
      \begin{minipage}{50mm}\centering\authorAfullname\\\authorAaddress\end{minipage}
      \hspace{5mm}
      \begin{minipage}{50mm}\centering\authorBfullname\\\authorBaddress\end{minipage}
    \end{center}}\or
    {\begin{center}
      \begin{minipage}{50mm}\centering\authorAfullname\\\authorAaddress\end{minipage}
      \hspace{5mm}
      \begin{minipage}{50mm}\centering\authorBfullname\\\authorBaddress\end{minipage}
     \end{center}

     \begin{center}
      \begin{minipage}{50mm}\centering\authorCfullname\\\authorCaddress\end{minipage}
     \end{center}}\or
    {\begin{center}
      \begin{minipage}{50mm}\centering\authorAfullname\\\authorAaddress\end{minipage}
      \hspace{5mm}
      \begin{minipage}{50mm}\centering\authorBfullname\\\authorBaddress\end{minipage}
     \end{center}

     \begin{center}
      \begin{minipage}{50mm}\centering\authorCfullname\\\authorCaddress\end{minipage}
      \hspace{5mm}
      \begin{minipage}{50mm}\centering\authorDfullname\\\authorDaddress\end{minipage}
     \end{center}}\else
    {}
  \fi  
}
\renewenvironment{abstract}{\textbf{Abstract.}}{}
\begin{document}

\pagestyle{fancy}
\fancyhf{}
\fancyhead[RO,LE]{{\slshape Festschrift in Honor of Uwe Helmke}}
\fancyhead[LO,RE]{{\nouppercase{\slshape\leftmark}}}
\fancyfoot[CO,CE]{\thepage}
\renewcommand{\headrulewidth}{0mm}




\renewcommand{\numberofauthors}{1} 

\renewcommand{\authorAfullname}{Jonathan H. Manton}
\renewcommand{\authorAindexname}{Manton, Jonathan H.}
\renewcommand{\authorAaddress}{The University of Melbourne\\
Victoria, 3010, Australia\\
\texttt{j.manton@ieee.org}}

\renewcommand{\papertitle}{Optimisation Geometry}

\processpaperdata 

{





\newcommand{\reals}{\mathbb{R}}
\newcommand{\restrict}[2]{\left.{#1}\right|_{#2}}


\begin{abstract}
This article demonstrates how an understanding of the geometry of
a family of cost functions can be used to develop efficient numerical
algorithms for real-time optimisation.  Crucially, it is not the
geometry of the individual functions which is studied, but the
geometry of the family as a whole.  In some respects, this challenges
the conventional divide between convex and non-convex optimisation
problems because none of the cost functions in a family need be
convex in order for efficient numerical algorithms to exist for
optimising in real-time any function belonging to the family.  The
title ``Optimisation Geometry'' comes by analogy from the study of
the geometry of a family of probability distributions being called
information geometry.
\end{abstract}

\section{Introduction and Motivation}
\label{sec:intro}

Classical optimisation theory is concerned with developing
algorithms that scale well with increasing problem size and is
therefore well-suited to ``one-time'' optimisation tasks such as
encountered in the planning and design phases of an engineering
endeavour.  Techniques from classical optimisation theory are often
applied to ``real-time'' optimisation tasks in signal
processing applications, yet real-time optimisation problems have
their own exploitable characteristics.

The often overlooked perspective this article brings to real-time
optimisation problems is that the family of cost functions should
be studied as a whole.  This leads to a nascent theory of real-time
optimisation that explores the theoretical and practical consequences
of understanding the topology and geometry of how a collection of
cost functions fit together.

For the purposes of this article, real-time optimisation
is the challenge of developing a numerical algorithm
taking a parameter value $\theta \in \Theta$ as input, and
returning relatively quickly a suitable approximation to an element of
\begin{equation}
\label{eq:xast}
\left\{ x_\ast \in X \mid f(x_\ast) = \min_x f(x;\theta) \right\}
\end{equation}
where the parametrised cost functions $f(\cdot;\theta)$ are known
in advance.  Since combinatorial and other non-smooth optimisation
problems are less amenable to the methods introduced in this article,
for the moment it may be assumed that $X$ and $\Theta$ are
differentiable manifolds and $f: X \times \Theta \rightarrow
\mathbb{R}$ is a smooth function.  (An important generalisation involving
smooth fibre bundles will be introduced in Section~\ref{sec:fb}.)

An example of real-time optimisation in signal processing
is maximum-likelihood estimation, where $x$ is the parameter to be
estimated from the observation $\theta$ and $f(x;\theta)$ is the
negative logarithm of the statistical likelihood function.  In a
communications system, if the transmitted message is $x$ and the
received packet is $\theta$ then each time a new packet is received
the optimisation problem (\ref{eq:xast}) must be solved to recover
$x$ from $\theta$.

The distinguishing features setting apart real-time
optimisation from classical optimisation are: the class of cost
functions $f(\cdot;\theta)$ is known in advance; the class is
relatively small (meaning $\Theta$ is finite-dimensional); an
autonomous algorithm is required that quickly and efficiently
optimises $f(\cdot;\theta)$ for (almost) any value of $\theta$.

Real-time optimisation problems also differ from adaptive problems
in that global robustness is important.  Real-time algorithms must
be capable of handling in turn any sequence of values for the
parameter $\theta$, whereas adaptive algorithms can assume successive
values of $\theta$ will be close to each other, thereby simplifying
the problem to that of tracking perturbations.  Nevertheless, there
are similarities because it is proposed here, in essence, to solve
real-time optimisation problems by reducing them to tracking problems.
Geometry facilitates this reduction.

The recent popularity of convex optimisation methods in signal
processing exemplifies the earlier remark that classical optimisation
theory is often applied to real-time optimisation problems.  While
great benefit has come from the realisation that classes of signal
processing problems can be converted into convex optimisation
problems such as Second-Order Cone Programming problems, this
approach does not exploit the relationships between the different
cost functions in the same family.

Although convexity currently determines the dichotomy of optimisation
--- convex problems are ``easy'' and non-convex problems are
``hard''~\cite{bk:Nazareth:opt} --- this is 
irrelevant for real-time optimisation because the complexity of
real-time algorithms can be reduced by using 
the results of offline computations made during the design
phase.  An extreme example is when all the cost
functions $f(\cdot;\theta)$ are just translated versions of a cost
function $h(\cdot)$, such as $f(x;\theta) = h(x-\theta)$.  The cost
function $h$ might be difficult to optimise, but once
its minimum $x_{\ast}$ has been found, the real-time optimisation
algorithm itself is trivial: given $\theta$, the minimum of
$f(x;\theta) = h(x-\theta)$ is immediately computed to be $x_{\ast}
+ \theta$.

This line of reasoning extends to more general situations.  For
concreteness, take the parameter space $\Theta$ to be the circle
$S^1$ (or, in fact, any compact manifold).  As before, each individual
cost function $f(\cdot;\theta)$ might be difficult to optimise, but
provided the location of the minimum varies smoothly for almost
every value of $\theta$, the following (simplified) algorithm
presents itself.  Choose a finite number of parameter values
$\theta_1,\cdots,\theta_n \in \Theta$.  Using whatever means possible,
compute beforehand the minima $x_1,\cdots,x_n \in X$ of the cost
functions $f(x;\theta_i)$, that is, $f(x_i) = \min_x f(x;\theta_i)$.
The minimum of $f(\cdot;\theta)$ generally can be found quickly and
reliably by determining the $\theta_i$ closest to $\theta$, and
starting with the pair $(x_i,\theta_i)$, applying a homotopy
method~\cite{bk:Allgower:num_cont} to find the minimum of successive
cost functions $f(\cdot; \theta_i + k \epsilon)$ for $k=1,\cdots,K$,
where $\epsilon = (\theta - \theta_i)/K$; see Section~\ref{sec:halg}
for details. Thus, the overall complexity of real-time optimisation
is determined by how the cost functions $f(\cdot;\theta)$ change
as $\theta$ is varied, and not by any classical measure of the
difficulty of optimising a particular cost function in the family
$\{f(\cdot;\theta) \mid \theta \in \Theta\}$.

Another reason for believing in advance that the geometry of the
family of cost funtions as a whole will help determine the computational
complexity of real-time optimisation is that work on topological
complexity and real complexity theory has already demonstrated that
the geometry of a problem provides vital clues for its numerical
solution~\cite{Smale:top_alg, Smale:fund_thm_alg, bk:Blum:newt}.
(Another example of the efficacy of using geometry to develop
numerical solutions is~\cite{Arnold:ext_calc}.)

Shifting from a Euclidean-based perspective of optimisation to a
manifold-based perspective is expected to facilitate the development
of a complexity theory for real-time optimisation.  Moving to a
differential geometric setting accentuates the geometric aspects
while attenuating artifacts introduced by specific choices of
coordinate systems used to describe an optimisation
problem~\cite{Manton:opt_mfold,Manton:jad,bk:Jongen:optimization}.
Furthermore, a wealth of problems occur naturally on
manifolds~\cite{bk:Chirikjian:harm,Manton:opt_mfold,Manton:Cicassp_diffg},
and coaxing them into a Euclidean framework is artificial and
not necessarily beneficial.

The flat, unbounded geometry of Euclidean space places no topological
restrictions on cost functions $f: \reals^n \rightarrow \reals$.
Focusing on compact manifolds creates a richer structure for
algorithms to exploit while maintaining practical relevance: compact
Lie groups, and Grassmann and Stiefel manifolds occur in a range
of signal processing applications.  To the extent that no algorithm
can search an unbounded region in finite time, the restriction to
compact manifolds is not necessarily that restrictive.  As a first
step then, this article focuses on optimisation problems on compact
manifolds.

One way to visualise how the cost functions in a family fit together
is to imagine the mapping $\theta \mapsto
f(\cdot;\theta)$ carving out a subset of the space of all (smooth)
functions.  This is essentially the approach taken in information
geometry~\cite{bk:Amari:info_geom}, where $f(\cdot;\theta)$ is a
probability density function rather than a cost function.  It seems
appropriate then to use Optimisation Geometry as the title of this
article.

Tangentially, it is remarked that even for one-time optimisation
problems, it is not clear to the author that convexity is the
fundamental divide separating easy from hard problems.  Convexity
might be an artifact of focusing on optimisation problems on
$\reals^n$ rather than on compact manifolds.  There do not exist
any nontrivial convex functions $f: M \rightarrow \reals$ on a
compact connected manifold $M$ --- if $f$ is
convex~\cite{bk:Udriste:convex} then it is necessarily a constant
--- yet if $M$ were a circle or a sphere, presumably there are
numerous classes of cost functions that can be ``easily'' optimised.

\section{A Fibre Bundle Formulation of Optimisation}
\label{sec:fb}

A real-time optimisation algorithm computes a possibly
discontinuous mapping $g$ from $\Theta$ to $X$.  Given an input
$\theta \in \Theta$, the algorithm returns $g(\theta) \in X$ where
$g$ satisfies
\begin{equation}
f(g(\theta);\theta) = \min_x f(x;\theta)
\end{equation}
for all, or almost all, $\theta \in \Theta$.  (Randomised
algorithms are not considered here.) In a certain sense then, the
additional information contained in the cost functions $f(\cdot;\theta)$
is irrelevant; if a closed-form expression for $g$ can be determined
then the original functions $f$ can be discarded.

However, often in practice it is too hard (or not worth the effort)
to find $g$ explicitly.  Optimisation algorithms therefore typically
make use of the cost function, finding the minimum by 
moving downhill, for example.  With the caveat that there is no
need to remain with the original cost functions $f(\cdot;\theta)$
--- they can be replaced by any other family provided there is no
consequential change to the ``optimising function'' $g$ --- a first
attempt at studying the complexity of real-time optimisation problems
can be made by endeavouring to link the geometry of $f$ with the
computational complexity of evaluating the optimising function $g$.

Define $M$ to be the product manifold $M = X \times \Theta$, and
let $\pi: M \rightarrow \Theta$ denote the projection $(x,\theta)
\mapsto \theta$.  The family of cost functions $f(\cdot;\theta)$
can be thought of as a single function $f: M \rightarrow
\reals$, that is, as a scalar field on $M$.  Provided $f: M \rightarrow
\reals$ is smooth, the manifold $M$ relates to how the cost
functions fit together.

If $f$ is not smooth, a reparametrisation of the family of cost
functions could be sought to make it smooth; in essence, a
parametrisation $\theta \mapsto f(\cdot;\theta)$ is required for
which smooth perturbations of $\theta$ result in smooth perturbations
of the corresponding cost functions.  To increase the chance of
this being possible, an obvious and notationally convenient
generalisation of the real-time optimisation problem is introduced.

\begin{definition}[Fibre Bundle Optimisation Problem]
\label{def:fbop}
Let $M$ be a smooth fibre bundle over the base space $\Theta$ with
typical fibre $X$ and canonical projection $\pi: M \rightarrow
\Theta$.  Let $f: M \rightarrow \reals$ be a smooth function.  The
fibre bundle optimisation problem is to devise an algorithm computing
an \emph{optimising function} $g: \Theta \rightarrow M$ that
satisfies $(\pi \circ g)(\theta) = \theta$ and $(f \circ g)(\theta)
= \min_{p \in \pi^{-1}(\theta)} f(p)$ for all $\theta \in \Theta$.
\end{definition}

\textbf{Standing Assumptions:}
For mathematical simplicity, it is assumed throughout that 
$M$, $\Theta$ and $X$ in Definition~\ref{def:fbop} are
compact.  Smoothness means $C^{\infty}$-smoothness.

If $M = X \times \Theta$ then the only difference from before is
that the output of the algorithm is now a tuple $(x_\ast,\theta)
\in M$ rather than merely $x_\ast \in X$.  
Allowing $M$ to be a non-trivial bundle is useful in practice,
as now demonstrated.

\begin{example}
Let $M$ and $\Theta$ be compact connected manifolds. If $\pi: M
\rightarrow \Theta$ is a submersion then it is necessarily surjective
and makes $M$ a fibre bundle.  Given a smooth $f: M \rightarrow
\reals$, the fibre bundle optimisation problem is equivalent to the
constrained optimisation problem of minimising $f(p)$ subject to
$\pi(p) = \theta$.
\end{example}

\begin{example}
\label{ex:stgr}
Let $\mathrm{St}(k,n) = \{X \in \reals^{n \times k} \mid X^T X =
I\}$ denote a Stiefel manifold and $\mathrm{O}(k) = \{X \in \reals^{k
\times k} \mid X^T X = I\}$ an orthogonal group.  The Grassmann
manifold $\mathrm{Gr}(k,n)$ is a quotient space of $\mathrm{St}(k,n)$,
and in particular, $\mathrm{St}(k,n)$ decomposes as a bundle $\pi:
\mathrm{St}(k,n) \rightarrow \mathrm{Gr}(k,n)$ with typical fibre
$\mathrm{O}(k)$.  Given a smooth function $f: \mathrm{St}(k,n)
\rightarrow \reals$, the corresponding fibre bundle optimisation
problem is to minimise $f(X)$ subject to the range-space of $X$
being fixed (that is, that $\pi(X)$ is known).  A related optimisation
problem (involving a constraint on the kernel rather than the
range-space of $X$) occurs naturally in low-rank approximation
problems~\cite{Manton:wlra}.
\end{example}

The optimisation problem in Example~\ref{ex:stgr} can be written
in parametrised form by changing $f$ to $\tilde f: \mathrm{Gr}(k,n)
\times O(k) \rightarrow \reals$, but if $f$ is smooth then $\tilde
f$ need not be continuous.  Fibre bundles allow for twists in the
global geometry.

\begin{example}
Another decomposition of $\mathrm{St}(k,n)$ is $\pi: \mathrm{St}(k,n)
\rightarrow S^{n-1}$ where $\pi(X)$ is the first column of $X$.
This corresponds to interpreting an element $X \in \mathrm{St}(k,n)$
as a point in the $(k-1)$-dimensional orthogonal frame bundle of
the $(n-1)$-dimensional sphere.  More generally, fibre bundle
optimisation problems arise whenever a smooth function $f$ is defined
on a tangent bundle, sphere bundle, (orthogonal) frame bundle or
normal bundle of a manifold $M$, and it is required to optimise
$f(p)$ subject to $p$ being constrained to lie above a specified
point on $M$.
\end{example}

\begin{remark}
\label{rem:restrict}
Fibre bundle optimisation problems (Definition~\ref{def:fbop})
decompose into lower-dimensional fibre bundle optimisation problems.
If $\tilde\Theta$ is a submanifold of $\Theta$ then the restriction
of $\pi$ to $\pi^{-1}(\tilde\Theta)$ makes $M \cap \pi^{-1}(\tilde\Theta)$
into a fibre bundle over $\tilde\Theta$.  Conversely, a fibre bundle
optimisation problem can be embedded in a higher-dimensional fibre
bundle optimisation problem.
\end{remark}

The optimising function $g$ in Definition~\ref{def:fbop} would be
a section if it were smooth, but in general $g$ need not be everywhere
continuous much less smooth.  This is handled by imposing a niceness
constraint on the optimisation problem.

\begin{definition}[Niceness]
\label{def:nice}
The fibre bundle optimisation problem in Definition~\ref{def:fbop}
is deemed to be nice if there exist a finite number of connected
open sets $\Theta_i \subset \Theta$ whose union is dense in $\Theta$,
and there exist smooth local sections $g_i: \Theta_i \rightarrow
M$ such that $(f \circ g_i)(\theta) = \min_{p \in \pi^{-1}(\theta)}
f(p)$ whenever $\theta \in \Theta_i$.
\end{definition}

The requirement that the $g_i$ are sections means $\pi(g_i(\theta))
= \theta$ for every $\theta \in \Theta_i$.  The smallest number of
connected open sets required in Definition~\ref{def:nice} can be
considered to be the topological complexity of the optimisation
problem by analogy with the definition of topological complexity
in~\cite{Smale:top_alg}; note though that the $g_i$ are required
to be smooth in Definition~\ref{def:nice} whereas Smale
required only continuity.

\begin{remark}
\label{rem:theta}
A more practical definition of niceness might require the $\Theta_i$
in Definition~\ref{def:nice} to be semialgebraic sets, perhaps even
with a limit placed on the number of function evaluations
required to test if $\theta$ is in $\Theta_i$.  This is not seen
as a major issue though because it is always possible to evaluate
more than one of the $g_i$ at $\theta$ and choose the one which
gives the lowest value of $f(g_i(\theta))$;
the algorithm for computing $g_i$ can return whatever
it likes if $\theta \notin \Theta_i$. See also Section~\ref{sec:halg}.
\end{remark}

Whereas Section~\ref{sec:intro} only required a real-time optimisation
algorithm to compute the correct answer for \emph{almost} all values
of $\theta$, the standing assumption of compactness together with
restricting attention to nice problems means the algorithm can be
required to work for all $\theta$; see Remarks~\ref{rem:limit}
and~\ref{rem:closed}.

\begin{remark}
\label{rem:limit}
The compactness of $M$ means that if $\theta_n \in \Theta_i$,
$\theta_n \rightarrow \theta$ then $\{g_i(\theta_n)\}_{n=1}^{\infty}$
has at least one limit point, call it $q$.  Then $\pi(q) = \theta$
and $f(q) = \min_{p \in \pi^{-1}(\theta)} f(p)$.  Therefore, if a
fibre bundle optimisation problem is nice (Definition~\ref{def:nice})
then an optimising function exists on the whole of $\Theta$
(Definition~\ref{def:fbop}).
\end{remark}

\begin{remark}
\label{rem:graph}
In Definition~\ref{def:fbop}, the geometry of the optimisation
problem is encoded jointly by $M$ and $f$.  It is straightforward
to reduce $f$ to a canonical form by replacing $M$ with the graph
$\Gamma = \{(p,f(p)) \in M \times \reals \mid p \in M\}$.  Then $f$
becomes the height function $(x,y) \mapsto y$ and the geometry of
the optimisation problem is encoded in how $\Gamma$ sits inside $M
\times \reals$.  
\end{remark}

As a visual aid, it can be assumed, from Remark~\ref{rem:graph} and
the Whitney embedding theorem, that $M$ is embedded in Euclidean
space and the level sets $f^{-1}(c)$ are horizontal slices of $M$.

\section{The Torus}
\label{sec:torus}

To motivate subsequent developments, this section primarily considers
fibre bundle optimisation problems on the product bundle $M=S^1
\times S^1$.  The function $f: S^1 \times S^1 \rightarrow \reals$
can be thought of as defining the temperature at each point of a
torus.  Definitions and results will be stated in generality though,
for arbitrary $M$.

\subsection{Fibre-wise Morse Functions}

Minimising $f: S^1 \times S^1 \rightarrow \reals$ restricted to a
fibre is simply the problem of minimising a real-valued function
on a circle.  The smoothness of $f$ and the compactness of $S^1$
ensure the existence of at least one global minimum per fibre.

To give more structure to the set of critical points, it is common
to restrict attention either to real-analytic functions or Morse
functions.  Optimisation of real-analytic functions will not be
considered here, but may well prove profitable for the study of
gradient-like algorithms for fibre bundle optimisation problems.

If $h: S^1 \rightarrow \reals$ is Morse, meaning all its critical
points are non-degenerate, then its critical points are isolated
and hence finite in number.  Furthermore, the Newton method for
optimisation converges locally quadratically to non-degenerate
critical points.  These are desirable properties that will facilitate
the development of optimisation algorithms in Sections~\ref{sec:newt}
and~\ref{sec:halg}.

\begin{definition}[Fibre-wise Morse Function]
\label{def:fwmf}
A \emph{fibre-wise critical point} $p$ of the function $f$ in
Definition~\ref{def:fbop} is a critical point of
$\restrict{f}{\pi^{-1}(\pi(p))}$, the restriction of $f$ to the
fibre $\pi^{-1}(\pi(p))$ containing $p$.  It is non-degenerate if
the Hessian of $\restrict{f}{\pi^{-1}(\pi(p))}$ at $p$ is non-singular.
If all fibre-wise critical points of $f$ are non-degenerate then
$f$ is a fibre-wise Morse function.
\end{definition}

\begin{remark}
\label{rem:morse}
Note that $f$ being fibre-wise Morse differs from $f$ being Morse;
a non-degenerate fibre-wise critical point need not be a critical
point of $f$, and even if it were, it need not be non-degenerate
as a critical point of $f$.
\end{remark}

\begin{lemma}
\label{lem:N}
Let $f: M \rightarrow \reals$ be a fibre-wise Morse function
(Definition~\ref{def:fwmf}) on the bundle $\pi: M \rightarrow
\Theta$ (Definition~\ref{def:fbop}).  The set $N$ of fibre-wise
critical points is a submanifold of $M$ with the same
dimension as $\Theta$.  It intersects each fibre $\pi^{-1}(\theta)$
transversally.
\end{lemma}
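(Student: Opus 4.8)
The plan is to verify everything in a local trivialisation of the bundle and then invoke the regular value theorem. Fix $p\in M$ and $\theta_0=\pi(p)$, and choose a trivialising neighbourhood $U\subset\Theta$ of $\theta_0$ with coordinates $(\theta^1,\dots,\theta^m)$, where $m=\dim\Theta$, so that $\pi^{-1}(U)\cong U\times X$; also pick coordinates $(x^1,\dots,x^k)$ on a patch of the typical fibre $X$, where $k=\dim X$. In these coordinates a point $(\theta,x)$ is a fibre-wise critical point exactly when the vertical partial derivatives vanish, so $N\cap\pi^{-1}(U)$ is the zero set of the smooth map $F\colon\pi^{-1}(U)\to\reals^k$ defined by $F(\theta,x)=\bigl(\partial f/\partial x^1,\dots,\partial f/\partial x^k\bigr)$. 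The point to keep in mind is that $F$ depends on the chosen trivialisation and fibre chart, but its zero set $N\cap\pi^{-1}(U)$ does not, so the local pieces will patch together to describe $N$ intrinsically.

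First I would show that $0$ is a regular value of $F$. The Jacobian of $F$ at a point $q$ is a $k\times(m+k)$ matrix whose last $k$ columns are the entries of the fibre Hessian $\bigl(\partial^2 f/\partial x^i\partial x^j\bigr)$, that is, the Hessian of $\restrict{f}{\pi^{-1}(\pi(q))}$ at $q$. By the fibre-wise Morse hypothesis this $k\times k$ block is non-singular whenever $q\in F^{-1}(0)$, so the Jacobian has full rank $k$ there. Hence $N\cap\pi^{-1}(U)=F^{-1}(0)$ is an embedded submanifold of $\pi^{-1}(U)$ of dimension $(m+k)-k=m$. Since being a submanifold is a local property and such neighbourhoods cover $\Theta$, the set $N$ is a submanifold of $M$ of dimension $m=\dim\Theta$; compactness of $M$ together with the fact that each local zero set is closed makes $N$ a compact embedded submanifold.

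Finally, transversality to the fibres. Since $\dim T_pN+\dim T_p(\pi^{-1}(\theta_0))=m+k=\dim T_pM$, the condition $T_pN+T_p(\pi^{-1}(\theta_0))=T_pM$ is equivalent to $T_pN\cap T_p(\pi^{-1}(\theta_0))=\{0\}$. A vector $v$ in this intersection is vertical, $v=\sum_i v^i\,\partial/\partial x^i$, and lies in $T_pN=\ker dF_p$; but $dF_p$ applied to a vertical vector is just the fibre Hessian applied to $(v^1,\dots,v^k)$, which is injective by non-degeneracy, so $v=0$. Equivalently, $\restrict{\pi}{N}\colon N\to\Theta$ is a local diffeomorphism near every point. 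I do not expect a genuine obstacle here: the whole argument hinges on the single linear-algebra observation that the non-degenerate fibre Hessian already supplies rank $k$ in the Jacobian (the $\theta$-derivatives of $F$ being irrelevant to the count), and the same non-degeneracy is exactly what forces the vertical-tangent intersection to be trivial; the only real care needed is the bookkeeping with trivialisations noted above.
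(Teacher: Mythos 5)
Your proof is correct and follows essentially the same strategy as the paper's: express the set of fibre-wise critical points locally as the zero set of a $\reals^k$-valued map built from the vertical derivatives of $f$, use fibre-wise non-degeneracy of the Hessian to show $0$ is a regular value, and conclude via the regular value theorem. The paper phrases this with a local frame of the vertical bundle $VM$ rather than explicit trivialisation coordinates, and reads transversality off directly from $\ker de_p + V_pM = T_pM$ instead of the complementary trivial-intersection-plus-dimension-count argument, but these are cosmetic variants of the same linear-algebra observation.
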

\begin{proof}
It suffices to work locally; let $U \subset \Theta$ be open.  Denote
by $VM$ the vertical bundle of $M$; it is a subbundle of the tangent
bundle $TM$. Let $s_1,\cdots,s_k: \pi^{-1}(U) \rightarrow VM$ be a
local basis, where $k = \dim M - \dim \Theta$.
(The $s_i$ are local smooth sections of $VM$
such that $\{s_1(p),\cdots,s_k(p)\}$ is a basis for $V_pM$ for every
$p \in \pi^{-1}(U)$.)  Define $e: \pi^{-1}(U) \rightarrow \reals^k$
by $e(p) = (df(s_1(p)),\cdots,df(s_k(p)))$.  Then the set of
fibre-wise critical points is given locally by $N \cap \pi^{-1}(U)
= e^{-1}(0)$.  Fix $p \in N$.  Since $f$ is fibre-wise Morse, $de_p$
restricted to $V_pM$ is non-singular.  Therefore $de_p$ is surjective
and $\ker\, de_p + V_pM = T_pM$.  Thus, $e^{-1}(0)$ is an embedded
submanifold of $M$, it has dimension $\dim M - k = \dim \Theta$,
and it intersects each fibre transversally.
\end{proof}

The situation is especially nice on the torus: Lemma~\ref{lem:N}
implies that the set $N$ of fibre-wise critical points of a fibre-wise
Morse function is a disjoint union of a finite number of circles,
with each circle winding its way around the torus the same number
of times.  Precisely, there is an integer $b$ such that for any
$\theta$, each connected component of $N$ intersects the fibre
$\pi^{-1}(\theta) = S^1 \times \{\theta\}$ precisely $b$ times.

As soon as the fibre-wise critical points of $f$ are known at a
single fibre $\pi^{-1}(\theta)$, the fibre-wise critical points of
$f$ at another fibre $\pi^{-1}(\theta')$ can be determined 
by tracking each of the points in $N \cap \pi^{-1}(\theta)$ as
$\theta$ moves along a continuous path to $\theta'$.  This is
referred to as following the circles in $N$ from one fibre to
another.

Investing more effort beforehand can obviate the need to follow
more than one circle.  A lookup table can record the circle in $N$
on which the minimum lies based on which region contains $\theta$.
Proposition~\ref{prop:nice} formalises this.  (In practice, there
may be reasons for deciding to track more than one circle;
see Remark~\ref{rem:theta}.)

\begin{proposition}
\label{prop:nice}
If $f$ in Definition~\ref{def:fbop} is fibre-wise Morse
(Definition~\ref{def:fwmf}) then the fibre bundle optimisation
problem is nice (Definition~\ref{def:nice}).
\end{proposition}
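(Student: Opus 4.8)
The plan is to use Lemma~\ref{lem:N} to convert the statement into a question about a finite covering of $\Theta$, and then to carve $\Theta$ up according to which sheet of that covering carries the fibre-wise minimum. First I would upgrade Lemma~\ref{lem:N}: since $N$ has dimension $\dim\Theta$ and meets every fibre transversally, the dimension count forces $T_pN\cap V_pM=\{0\}$, so $d\pi_p$ restricts to a linear isomorphism $T_pN\to T_{\pi(p)}\Theta$ and $\pi|_N\colon N\to\Theta$ is a local diffeomorphism. As $N$ is closed in the compact manifold $M$ it is compact, so $\pi|_N$ is proper; a proper local diffeomorphism onto a connected base is a covering map with a finite, constant number of sheets $d$ (argue component-by-component if $\Theta$ is disconnected). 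On the torus $d=bc$, where $c$ is the number of circles making up $N$ and $b$ is the winding number from the discussion preceding the statement.

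Next, I would note that the fibre-wise minimum always lies on $N$: on each fibre $f$ is a smooth function on a compact manifold, so it attains its infimum, and the fibre-derivative vanishes there. Hence $\mu(\theta):=\min_{p\in\pi^{-1}(\theta)}f(p)=\min\{f(q):q\in(\pi|_N)^{-1}(\theta)\}$, a minimum of $d$ numbers. By compactness choose finitely many connected, simply connected open sets $U_1,\dots,U_m$ covering $\Theta$ over which $\pi|_N$ trivialises, with smooth sections $\sigma_{\alpha,1},\dots,\sigma_{\alpha,d}\colon U_\alpha\to N\subset M$, and put $\phi_{\alpha,j}:=f\circ\sigma_{\alpha,j}$, so that $\mu=\min_j\phi_{\alpha,j}$ on $U_\alpha$. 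The closed sets $B_{\alpha,j}:=\{\theta\in U_\alpha:\phi_{\alpha,j}(\theta)=\mu(\theta)\}$ cover $U_\alpha$; writing $B_{\alpha,j}=\operatorname{int}B_{\alpha,j}\sqcup\partial B_{\alpha,j}$ and observing that each $\partial B_{\alpha,j}$ has empty interior, a one-line Baire argument shows $\bigcup_j\operatorname{int}B_{\alpha,j}$ is dense in $U_\alpha$, hence $\bigcup_{\alpha,j}\operatorname{int}B_{\alpha,j}$ is dense in $\Theta$. On any connected component $\Theta_i$ of any $\operatorname{int}B_{\alpha,j}$ the restriction $g_i:=\sigma_{\alpha,j}|_{\Theta_i}$ is a smooth section with $(f\circ g_i)(\theta)=\mu(\theta)$, which is exactly the data Definition~\ref{def:nice} asks for, modulo one point.

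The hard part will be \emph{finiteness} of the family $\{\Theta_i\}$: Definition~\ref{def:nice} insists on finitely many connected open sets, whereas a single $\operatorname{int}B_{\alpha,j}$ can a priori have infinitely many connected components --- for merely $C^{\infty}$ data the locus $\{\phi_{\alpha,j}<\phi_{\alpha,l}\}$ may change sign infinitely often near a point where $\phi_{\alpha,j}-\phi_{\alpha,l}$ vanishes to infinite order, and one can check such an $f$ can still be fibre-wise Morse. I therefore expect the genuine content of the proof to be a tameness input bounding these component counts: it suffices, for instance, that $f$ be real-analytic, so that the $B_{\alpha,j}$ are semianalytic and locally have finitely many components, or more generally that $f$ and the bundle be definable in a fixed o-minimal structure, in which case one can moreover take the $\Theta_i$ to be connected subanalytic (or semialgebraic) sets, dovetailing with Remark~\ref{rem:theta}. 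I would present the covering-and-Baire construction in full and invoke tameness precisely at the point where the number of components of $\operatorname{int}B_{\alpha,j}$ must be finite; everything else in the argument is soft.
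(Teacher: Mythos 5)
Your construction matches the paper's own proof almost step for step: both use Lemma~\ref{lem:N} to obtain local sections of $N$ over a finite open cover of $\Theta$ (you package this as a finite covering $\pi|_N$, the paper works with local sections directly, but this is cosmetic --- the paper records the covering structure in its Corollary~11 anyway), both define the closed loci where a given branch attains the fibre-wise minimum, both pass to interiors, and both use the elementary observation that a finite union of closed nowhere-dense boundaries cannot obstruct density.

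Where you diverge from the paper is your last paragraph, and there you have put your finger on something the paper's proof genuinely slides past. The paper takes $\Theta_{ij}=\operatorname{int}J_{ij}$, notes these form a \emph{finite} family of open sets with dense union, and then simply asserts that ``the requirements of Definition~\ref{def:nice} are met.'' But Definition~\ref{def:nice} requires the $\Theta_i$ to be \emph{connected}, and the interior of a closed subset of a ball has no reason to be connected. Refining each $\Theta_{ij}$ into its connected components preserves openness and density but, as you observe, may destroy finiteness: for merely $C^\infty$ data the comparison locus $\{\phi_{\alpha,j}<\phi_{\alpha,l}\}$ can have infinitely many components accumulating at a point where $\phi_{\alpha,j}-\phi_{\alpha,l}$ vanishes to infinite order, and the fibre-wise Morse condition is of no help because it constrains only vertical Hessians, not the horizontal variation of critical values. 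Indeed one can build an explicit torus example, e.g.\ $f(x,\theta)=\sin^2 x+\psi(\theta)\cos x$ on $S^1\times S^1$ with $\psi$ smooth, $|\psi|<2$, and $\psi$ changing sign infinitely often near a single $\theta_0$ where it is flat; this $f$ is fibre-wise Morse, the two fibre-wise minima $x=0$ and $x=\pi$ have values $\psi(\theta)$ and $-\psi(\theta)$, and the global minimiser oscillates infinitely often between them, so no finite family of connected $\Theta_i$ with smooth optimising sections can be dense. So the gap is real and Proposition~\ref{prop:nice} as stated needs a tameness hypothesis (real-analyticity, or definability in an o-minimal structure) that the paper neither assumes nor uses. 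Your plan --- carry out the covering-and-Baire construction verbatim and invoke tameness only at the point where the component count of $\operatorname{int}B_{\alpha,j}$ must be bounded --- is exactly the right repair, and it dovetails with Remark~\ref{rem:theta}'s wish for semialgebraic $\Theta_i$. I would only add the concrete oscillating example above to make it unambiguous that the hypothesis is not removable by a cleverer argument.
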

\begin{proof}
Let $N$ be the set of fibre-wise critical points of $f$.  For $\theta
\in \Theta$, $N \cap \pi^{-1}(\theta)$ is a finite set of points
because $\pi^{-1}(\theta)$ is compact and $N \pitchfork \pi^{-1}(\theta)$
with $\dim N + \dim \pi^{-1}(\theta) = \dim M$; see Lemma~\ref{lem:N}. Therefore there
exist an open neighbourhood $U_\theta \subset \Theta$ of $\theta$
and local smooth sections $s^{(\theta)}_1,\cdots, s^{(\theta)}_{k_\theta}:
U_\theta \rightarrow M$ such that $N \cap \pi^{-1}(U_\theta) =
\cup_{i=1}^{k_\theta} s^{(\theta)}_i(U_\theta)$; pictorially, each
section traces out a distinct component of $N \cap \pi^{-1}(U_\theta)$.
Let $V_\theta \subset \Theta$ be an open neighbourhood of $\theta$
whose closure $\overline{V_\theta}$ is contained in $U_\theta$.  By
compactness there exist a finite number of the $V_\theta$ which
cover $\Theta$; denote these sets by $V_{\theta_i}$. Let $J_{ij} =
\{\theta \in \overline{V_{\theta_i}} \mid f(s^{(\theta_i)}_j(\theta))
= h(\theta)\}$ where $h(\theta) = \min_{p \in \pi^{-1}(\theta)}
f(p)$.  Each $J_{ij}$ is a closed subset of
$\overline{V_{\theta_i}}$ because $h$ is continuous.  Furthermore,
$\cup_j J_{ij} = \overline{V_{\theta_i}}$ and hence $\cup_{ij}
J_{ij} = \Theta$.  Let $\Theta_{ij}$ denote the interior of $J_{ij}$.
Since $J_{ij} \setminus \Theta_{ij}$ is nowhere dense,
$\cup_{ij} \Theta_{ij}$ is
dense in $\Theta$.  The requirements of Definition~\ref{def:nice}
are met with $g_{ij}(\theta) = s^{(\theta_i)}_j(\theta)$.
\end{proof}

\begin{remark}
\label{rem:closed}
A stronger definition of niceness could have been adopted: each
$g_i$ in Definition~\ref{def:nice} could have been required to be
a smooth optimising function on $\overline{\Theta_i}$, the closure
of $\Theta_i$.  Also, because there are only a finite number of
sets involved, $\cup_i \Theta_i$ is dense in $\Theta$ if and only
if $\cup_i \overline{\Theta_i} = \Theta$.
\end{remark}

\subsection{Connection with Morse Theory}

It is natural to ask what role Morse theory plays in
real-time optimisation.  After all, Morse theory contributes to
one-time optimisation problems by providing information about the
number, type and to some extent the location of critical points.

The short answer is the connection between Morse theory and
real-time optimisation is more subtle than for one-time optimisation.
The fibre bundle formulation of real-time optimisation highlights that
real-time optimisation is concerned with \emph{constrained}
optimisation.  It is not the level sets
$\{p \in M \mid f(p) = c\}$ that are important for real-time
optimisation but how they intersect the fibres $\pi^{-1}(\theta)$.
From an algorithmic perspective, whereas one-time optimisation
algorithms are required to find (isolated) critical points, real-time
optimisation algorithms (at least from the viewpoint of this article)
are required to track the critical points from fibre to fibre.

Nevertheless, for completeness, this section recalls what classical
Morse theory says about the torus.  Let $f: M \rightarrow
\reals$ be a smooth Morse function on $M = S^1 \times S^1$ with
distinct critical points having distinct values.  This is a mild
assumption in practice because an arbitrarily small perturbation
of $f$ can always be found to enforce this.

Morse theory explains how the level sets $f^{-1}(c)$ fit together
to form $M$.  The fibre bundle optimisation problem is to find the
smallest $c$ for which $f^{-1}(c)$ intersects the submanifold
$\pi^{-1}(\theta)$ for a given $\theta$.

If $p \in \pi^{-1}(\theta)$ is a local minimum of $f$ then it is
also a local minimum of $\restrict{f}{\pi^{-1}(\theta)}$, and
similarly for a local maximum.  In both cases, $p$ is an isolated
critical point of $\restrict{f}{\pi^{-1}(\theta)}$.  This need not
be true though if $p$ is a saddle point of $f$.

Let $p_0,\cdots,p_{n-1}$ denote the critical points of $f$ ordered
so the values $c_i = f(p_i)$ ascend.  The genus of the torus dictates
that the number of saddle points equals the total number of local
minima and maxima, therefore $n \geq 4$.

For $c \in [c_0,c_{n-1}]$ a regular value of $f$, $f^{-1}(c)$ is a
compact one-dimensional manifold and hence diffeomorphic to a finite
number of circles.  The number of circles changes by
one as $c$ passes through a critical value.  In particular,
$f^{-1}(c_0)$ is a single point, $f^{-1}(c)$ for $c \in (c_0,c_1)$
is diffeomorphic to $S^1$, and $f^{-1}(c_1)$ is either diffeomorphic
to a circle plus a distinct point, or it is diffeomorphic to two
circles joined at a single point.  In general, $f^{-1}(c_i)$ is
either diffeomorphic to zero or more copies of a circle plus a
distinct point, or it is diffeomorphic to zero or more copies of a
circle plus two circles joined at a single point.  The former occurs
when $p_i$ is a local extremum and the latter occurs when $p_i$ is
a saddle point.

Not only is $f^{-1}(c)$ diffeomorphic to a finite number of circles
for $c$ a regular value, but $\pi^{-1}(\theta)$ is also diffeomorphic
to a circle.  Visually then, increasing $c$ corresponds to sliding
one or more rubber bands along the surface of the torus, and of
interest is when one of these rubber bands first hits the circle
$\pi^{-1}(\theta)$.  The point of first contact is either a critical
point of $f$ or a non-transversal intersection point of $f^{-1}(c)
\cap \pi^{-1}(\theta)$.  Indeed, if $p \in f^{-1}(c) \cap
\pi^{-1}(\theta)$ is not a critical point of $f$ then $p$ is a
critical point of $\restrict{f}{\pi^{-1}(\theta)}$ if and only if
$p$ is a non-transversal intersection point of $f^{-1}(c)$ with
$\pi^{-1}(\theta)$.  This connects with Definition~\ref{def:fwmf}.

\section{Newton's Method and Approximate Critical Points}
\label{sec:newt}

The Newton method is the archtypal iterative algorithm for function
minimisation.  Whereas its global convergence properties are intricate
--- domains of attraction can be fractal --- the local convergence
properties of the Newton method are well understood.  The advantage
of real-time optimisation over one-time optimisation is it
suffices to study local convergence properties of iterative algorithms
because suitable initial conditions can be calculated offline.

The concept of an approximate zero was introduced in~\cite{bk:Blum:newt}.
An equivalent concept will be used here, however subsequent
developments differ. In~\cite{bk:Blum:newt}, attention was restricted
to analytic functions and global constants were sought for use in
one-time algorithms (for solving polynomial equations), as opposed
to the focus here on real-time optimisation algorithms.

The Newton iteration for finding a critical point of $h: \reals^n
\rightarrow \reals$ is $x_{k+1} = x_k - [h''(x_k)]^{-1} h'(x_k)$.
Its invariance to affine changes of coordinates means it suffices
to assume in this section that the critical point of interest is
located at the origin.  The Euclidean norm and Euclidean inner
product are used throughout for $\reals^n$.

\begin{definition}[Approximate Critical Point]
Let $h: \reals^n \rightarrow \reals$ be a smooth function with a
non-degenerate critical point at the origin: $Dh(0) = 0$ and $D^2h(0)$
is non-singular.  A point $x$ is an \emph{approximate critical
point} if, when started at $x_0 = x$, the Newton iterates $x_k$ at
least double in accuracy per iteration: $\| x_{k+1} \| \leq \frac12
\| x_k \|$.
\end{definition}

Provided the critical point is non-degenerate, the set of approximate
critical points contains a neighbourhood of the critical point.
For the development of homotopy-based algorithms in Section~\ref{sec:halg},
it is desirable to have techniques for finding a $\rho > 0$ such
that all points within $\rho$ of the critical point are approximate
critical points.  Two techniques will be explored, starting with
the one-dimensional case for simplicity.

\begin{example}
\label{ex:cubic}
Let $h(x) = x^2 + x^3$.  Then $h'(x) = 2x+3x^2$ and $h''(x) = 2+6x$.
The Newton iterate is $x \mapsto x - \frac{2x+3x^2}{2+6x} =
\frac{3x^2}{2+6x}$.  Graphing this function shows that the largest
interval $[-\rho,\rho]$ containing only approximate critical points
is constrained by the equation $\frac{3x^2}{2+6x} = -\frac{x}2$ for
$x<0$.  In particular, $\rho = \frac16 \approx 0.17$ is the best possible.
\end{example}

Explicit calculation as in Example~\ref{ex:cubic} is generally not
practical.  It will be assumed that on an interval $I$ containing
the origin the first few derivatives of $h$ are bounded.
Since $h'(0)=0$, a basic approximation for $h'(x)$ on $I$ is $h'(x)
= x h''(\bar x)$ for some $\bar x \in I$.  It follows that if
$h''(\bar x) / h''(x)$ is bounded between $\frac12$ and $\frac32$
for $x, \bar x \in I$ then all points in $I$ are approximate critical
points.  Moreover, $h'''(x)$ can be used to bound the change in
$h''(x)$.  This makes plausible the following lemma.

\begin{lemma}
\label{lem:sbound}
Let $h: \reals \rightarrow \reals$ be a smooth function with $h'(0)=0$
and $h''(0) \neq 0$.  Let $I$ be an interval containing the origin
and $\alpha = \sup_{x \in I} |h'''(x)|$.  Let $\rho =
\frac{|h''(0)|}{2\alpha}$.  Then every point in the interval
$[-\rho,\rho] \cap I$ is an approximate critical point of $h$.
\end{lemma}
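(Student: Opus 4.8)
The plan is to reduce the lemma to a one-step contraction estimate and then iterate. Writing the Newton map as $T(x) = x - h'(x)/h''(x)$, I would first argue it suffices to show that for every $x$ in the interval $J := [-\rho,\rho]\cap I$ one has $h''(x)\neq 0$ (so that $T(x)$ is defined) and $|T(x)|\le \tfrac12|x|$. Indeed, $J$ is again an interval containing the origin, and $|T(x)|\le\tfrac12|x|<|x|\le\rho$ forces $T(x)\in J$; hence, starting from any $x_0\in J$, all Newton iterates $x_k$ stay in $J$ and satisfy $|x_{k+1}|\le\tfrac12|x_k|$, which is exactly the defining property of an approximate critical point. The degenerate cases $\alpha=0$ and $\alpha=\infty$ are trivial (Newton converges in one step, respectively $\rho=0$ and $J=\{0\}$), so I may assume $0<\alpha<\infty$.

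For the one-step estimate I would bound the numerator and denominator of $T(x) = \big(x\,h''(x) - h'(x)\big)/h''(x)$ using Taylor's theorem about the origin. Since $h'(0)=0$, the fundamental theorem of calculus gives $h'(x)=\int_0^x h''(t)\,dt$, so
\[
x\,h''(x)-h'(x)=\int_0^x\big(h''(x)-h''(t)\big)\,dt ,
\]
and, because the segment joining $0$ and $x$ lies in $I$ where $|h'''|\le\alpha$, one gets $|h''(x)-h''(t)|\le\alpha|x-t|$ and therefore $|x\,h''(x)-h'(x)|\le\tfrac12\alpha x^2$. The same Lipschitz bound applied between $0$ and $x$ gives $|h''(x)-h''(0)|\le\alpha|x|\le\alpha\rho=\tfrac12|h''(0)|$, hence $|h''(x)|\ge\tfrac12|h''(0)|>0$ throughout $J$. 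Combining the two bounds,
\[
|T(x)|=\frac{|x\,h''(x)-h'(x)|}{|h''(x)|}\le\frac{\tfrac12\alpha x^2}{\tfrac12|h''(0)|}=\frac{\alpha|x|}{|h''(0)|}\,|x|\le\tfrac12|x|,
\]
the last step because $|x|\le\rho=|h''(0)|/(2\alpha)$. Together with the reduction in the first paragraph this proves the lemma.

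I expect the Taylor estimates to be routine; the step most easily overlooked — and hence the one I would treat most carefully — is the claim that the iterates never leave $J$, since the definition of approximate critical point demands the doubling-of-accuracy bound at \emph{every} step, not just the first. This is exactly where the hypothesis $h''(0)\neq 0$ is used: the particular value of $\rho$ is what keeps $|h''|$ bounded below by $\tfrac12|h''(0)|$ on $J$, so that each Newton step is well defined and the contraction $|T(x)|\le\tfrac12|x|$ holds uniformly, after which invariance of the interval $J$ is immediate. As a sanity check, Example~\ref{ex:cubic} (where $h'''\equiv 6$, so $\alpha=6$, $h''(0)=2$, $\rho=\tfrac16$) shows all these inequalities can be simultaneously tight, so the value of $\rho$ in the lemma is the best obtainable from third-derivative information alone.
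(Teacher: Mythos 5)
Your proof is correct, and it takes a genuinely different route from the paper's. The paper proves Lemma~\ref{lem:sbound} in one line by reducing it to the later multivariate Proposition~\ref{pr:acp}, which itself is built on Lemma~\ref{lem:barH} (the averaged Hessian $\bar H_x = \int_0^1 H_{tx}\,dt$ and the contraction criterion $\|H_x^{-1}\bar H_x - I\|\le\tfrac12$) and Lemma~\ref{lem:nbound} (a Neumann-series resolvent bound); the only work left for Lemma~\ref{lem:sbound} is to observe that the mean value theorem turns the bound $|h'''|\le\alpha$ into the Lipschitz condition on $h''$ that Proposition~\ref{pr:acp} requires, with $H_x=h''(x)$ and $\|H_0^{-1}\|=1/|h''(0)|$. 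Your argument instead proves the one-dimensional statement from scratch: you rewrite the Newton residual as $x\,h''(x)-h'(x)=\int_0^x\bigl(h''(x)-h''(t)\bigr)\,dt$, bound it by $\tfrac12\alpha x^2$ via the Lipschitz estimate, bound $|h''(x)|$ below by $\tfrac12|h''(0)|$, and combine. Unwound, these two estimates coincide exactly with the one-dimensional specialisations of the bounds $\|\bar H_x-H_x\|\le\tfrac{\alpha}{2}\|x\|$ and $\|H_x-H_0\|\le\tfrac12\|H_0^{-1}\|^{-1}$ appearing in the proof of Proposition~\ref{pr:acp}, so the underlying quantities are the same; but your proof is self-contained, more elementary, and avoids the operator-norm machinery. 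You also buy one thing the paper leaves implicit: you state and justify that the Newton iterates remain in $J=[-\rho,\rho]\cap I$ and hence that the doubling bound holds at \emph{every} step (not just the first), which is what the definition of approximate critical point actually requires. The paper gets this for free because $\|x_1\|\le\tfrac12\|x_0\|\le\tfrac12\rho$ keeps the hypothesis of Proposition~\ref{pr:acp} satisfied inductively, but it does not spell that out, and flagging it explicitly is a worthwhile clarification. The trade-off is that the paper's route is more economical inside the paper's architecture, since Proposition~\ref{pr:acp} is needed anyway for the homotopy algorithm in Section~\ref{sec:halg}.
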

\begin{proof}
Follows from Proposition~\ref{pr:acp} upon noting that
$h''(x) - h''(y) = h'''(\bar x) (x-y)$ for some $\bar x$ lying
between $x$ and $y$.
\end{proof}

\begin{example}
In Example~\ref{ex:cubic}, $h'''(x) = 6$.  Applying Lemma~\ref{lem:sbound}
gives $\rho = \frac16 \approx 0.17$, coincidentally agreeing with
the best possible bound.
\end{example}

The second technique is to look at the derivative of the Newton map
$x \mapsto x - \frac{h'(x)}{h''(x)}$, which is $\frac{h'(x)
h'''(x)}{[h''(x)]^2}$.  Provided the magnitude of this derivative
does not exceed $\frac12$ then $x$ is an approximate critical point.

\begin{example}
In Example~\ref{ex:cubic}, $\frac{h'(x) h'''(x)}{[h''(x)]^2} =
\frac{3x(2+3x)}{(1+3x)^2}$.  Its magnitude does not exceed $\frac12$
provided $|x| \leq \frac{3-\sqrt{6}}9 \approx 0.06$. 
\end{example}

The need for evaluating $\frac{h'(x) h'''(x)}{[h''(x)]^2}$
can be avoided by using bounds on derivatives; an upper bound
on $|h'''(x)|$ gives a lower bound, linear in $x$, on $h''(x)$,
and an upper bound, quadratic in $x$, on $|h'(x)|$.
Nevertheless, the first technique appears to be preferable,
and will be the one considered further.

\begin{lemma}
\label{lem:barH}
Let $h: \reals^n \rightarrow \reals$ have a
non-degenerate critical point at the origin.  Let $H_x \in \reals^{n
\times n}$, a symmetric matrix, denote its Hessian at $x$, that is,
$D^2h(x) \cdot (\xi,\xi) = \langle H_x \xi, \xi \rangle$.
Let $\bar H_x$ denote the averaged Hessian $\bar H_x = \int_0^1
H_{tx}\,dt$.  Then $x$ is an approximate critical point if 
$\| H_x^{-1} \bar H_x - I \| \leq \frac12$, where the norm is
the operator norm.
\end{lemma}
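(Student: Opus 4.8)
The plan is to reduce one step of the Newton iteration to the action of a single linear map, and then to read the contraction factor straight off the hypothesis. The device is the integrated gradient: since $Dh(0)=0$, the fundamental theorem of calculus gives, for every $x$,
\[
\nabla h(x) \;=\; \nabla h(x)-\nabla h(0) \;=\; \int_0^1 \frac{d}{dt}\,\nabla h(tx)\,dt \;=\; \int_0^1 H_{tx}\,x\,dt \;=\; \bar H_x\,x .
\]
First I would substitute this into the Newton map started at $x_0=x$ to obtain the closed form
\[
x_1 \;=\; x_0 - H_{x_0}^{-1}\nabla h(x_0) \;=\; x - H_x^{-1}\bar H_x\,x \;=\; \bigl(I - H_x^{-1}\bar H_x\bigr)\,x ,
\]
whence $\|x_1\|\le\|H_x^{-1}\bar H_x - I\|\,\|x\|\le\tfrac12\|x\|$ is immediate from the hypothesis. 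As a by-product, $\|H_x^{-1}\bar H_x-I\|\le\tfrac12$ makes $H_x^{-1}\bar H_x$ invertible by a Neumann-series estimate (which also yields $\|(H_x^{-1}\bar H_x)^{-1}\|\le2$ should it be needed), so in particular $\bar H_x$ is nonsingular.

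To upgrade this to ``$x$ is an approximate critical point'' --- that is, $\|x_{k+1}\|\le\tfrac12\|x_k\|$ for every $k$ --- I would apply the very same identity at each iterate: $x_{k+1}=(I-H_{x_k}^{-1}\bar H_{x_k})\,x_k$, which halves the norm as soon as $\|H_{x_k}^{-1}\bar H_{x_k}-I\|\le\tfrac12$. The task is then to make that bound available along the whole trajectory. The clean way, and the way the lemma is meant to be used downstream (compare Lemma~\ref{lem:sbound}, where the estimate is produced uniformly on an interval $[-\rho,\rho]\cap I$, and Proposition~\ref{pr:acp}), is to assume the operator-norm bound on an entire ball $B_\rho(0)$ about the critical point rather than at the single point $x$; then the first step already gives $\|x_1\|<\|x_0\|\le\rho$, so a one-line induction keeps every $x_k$ in $B_\rho(0)$, the one-step estimate applies at each iterate, and $\|x_k\|\le 2^{-k}\|x_0\|\to 0$.

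The main obstacle is precisely this last point, and it is a matter of how the hypothesis is phrased rather than a hard estimate: the single-point bound $\|H_x^{-1}\bar H_x-I\|\le\tfrac12$ by itself buys only the first doubling, because the Newton iterate $x_1=(I-H_x^{-1}\bar H_x)x$ need not lie on the segment $[0,x]$ --- in dimension greater than one it generically does not --- so knowing $h$ only along $[0,x]$ says nothing about $\|H_{x_1}^{-1}\bar H_{x_1}-I\|$. Once the hypothesis is read as holding on a neighbourhood of the origin, everything else is routine: the only tools used are submultiplicativity of the operator norm, the Neumann bound $\|A-I\|\le\tfrac12\Rightarrow\|A^{-1}\|\le2$, and the elementary confinement argument above.
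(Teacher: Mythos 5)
Your core argument --- using $Dh(0)=0$ and the fundamental theorem of calculus to write $\nabla h(x) = \bar H_x x$, substituting into the Newton step to get $x_1 = (I - H_x^{-1}\bar H_x)x$, and reading off $\|x_1\| \leq \tfrac12\|x\|$ from the operator-norm hypothesis --- is exactly the paper's proof, line for line. The paper stops there with ``as claimed.''

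Where your proposal adds value is in the observation that this only establishes the \emph{first} halving, whereas the definition of approximate critical point demands $\|x_{k+1}\| \leq \tfrac12\|x_k\|$ for every $k$. You are right that the single-point hypothesis $\|H_x^{-1}\bar H_x - I\| \leq \tfrac12$ controls nothing at $x_1$, which in dimension $n>1$ will generically not lie on the segment $[0,x]$, so the paper's proof as written does not close the induction. This is a genuine looseness in the lemma as stated. Your diagnosis of the repair is also correct and is what the downstream usage actually supplies: in Proposition~\ref{pr:acp} the bound $\|H_x^{-1}\bar H_x - I\| \leq \tfrac12$ is derived uniformly for all $x$ in a ball about the origin (via the Lipschitz constant $\alpha$ and Lemma~\ref{lem:nbound}), and then your one-line confinement argument --- $\|x_1\| \leq \tfrac12\|x_0\| < \rho$ keeps every iterate in the ball where the estimate applies --- gives the full geometric decay. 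So the lemma should really be read as a one-step contraction estimate, or its hypothesis should be imposed on a neighbourhood; your write-up makes this explicit in a way the paper does not.
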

\begin{proof}
The gradient of $h$ at $x$ is $\int_0^1 H_{tx} x\,dt = \bar H_x x$.  Therefore
the Newton map is $x \mapsto x - H_x^{-1} \bar H_x x$.  If
$\| H_x^{-1} \bar H_x - I \| \leq \frac12$ then $\| x - H_x^{-1} \bar H_x x \|
\leq \frac12 \|x\|$, as claimed.
\end{proof}

\begin{lemma}
\label{lem:nbound}
With notation as in Lemma~\ref{lem:barH}, if
$\|H_x - H_0\| < \|H_0^{-1}\|^{-1}$ then
\begin{equation}
\| H_x^{-1} \bar H_x - I \| \leq \frac{\|\bar H_x - H_x\|}
{\|H_0^{-1}\|^{-1} - \|H_x-H_0\|}.
\end{equation}
\end{lemma}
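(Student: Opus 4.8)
The plan is to bound $\|H_x^{-1}\bar H_x - I\|$ by first factoring out $H_x^{-1}$ and then controlling $\|H_x^{-1}\|$ through a standard Neumann-series (Banach perturbation) argument anchored at the non-degenerate Hessian $H_0$.

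First I would observe that $H_x^{-1}\bar H_x - I = H_x^{-1}(\bar H_x - H_x)$, so submultiplicativity of the operator norm gives $\|H_x^{-1}\bar H_x - I\| \le \|H_x^{-1}\|\,\|\bar H_x - H_x\|$. It then remains to show $\|H_x^{-1}\| \le \bigl(\|H_0^{-1}\|^{-1} - \|H_x - H_0\|\bigr)^{-1}$ under the hypothesis $\|H_x - H_0\| < \|H_0^{-1}\|^{-1}$.

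For that I would write $H_x = H_0\bigl(I + H_0^{-1}(H_x - H_0)\bigr)$. The hypothesis yields $\|H_0^{-1}(H_x - H_0)\| \le \|H_0^{-1}\|\,\|H_x - H_0\| < 1$, so the Neumann series $\sum_{j\ge 0}(-1)^j\bigl(H_0^{-1}(H_x-H_0)\bigr)^j$ converges and provides an inverse of $I + H_0^{-1}(H_x - H_0)$ of operator norm at most $\bigl(1 - \|H_0^{-1}\|\,\|H_x - H_0\|\bigr)^{-1}$. Hence $H_x$ is invertible, $H_x^{-1} = \bigl(I + H_0^{-1}(H_x - H_0)\bigr)^{-1}H_0^{-1}$, and
\[
\|H_x^{-1}\| \le \frac{\|H_0^{-1}\|}{1 - \|H_0^{-1}\|\,\|H_x - H_0\|} = \frac{1}{\|H_0^{-1}\|^{-1} - \|H_x - H_0\|},
\]
the last equality on multiplying numerator and denominator by $\|H_0^{-1}\|^{-1}$. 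Combining with the first step gives the stated bound.

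I do not expect a substantial obstacle: the only ingredient beyond algebra is the Banach perturbation lemma, and the non-degeneracy of $H_0$ (so $\|H_0^{-1}\|^{-1}>0$) is exactly what makes the hypothesis non-vacuous. The sole point needing a line of care is that $\bar H_x=\int_0^1 H_{tx}\,dt$ is again symmetric, so the operator norm applies to it verbatim and the submultiplicativity used above is legitimate; this is immediate.
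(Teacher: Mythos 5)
Your proof is correct and follows essentially the same route as the paper: factor $H_x^{-1}\bar H_x - I = H_x^{-1}(\bar H_x - H_x)$, then bound $\|H_x^{-1}\|$ by a Neumann-series (Banach perturbation) argument anchored at $H_0$ — the paper writes $H_x^{-1}=H_0^{-1}(I-A)^{-1}$ with $A=-(H_x-H_0)H_0^{-1}$, whereas you factor $H_0$ on the other side, a purely cosmetic difference. (Your closing remark about symmetry is unnecessary, since operator-norm submultiplicativity holds for arbitrary matrices, but it is harmless.)
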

\begin{proof}
Let $A = -(H_x-H_0)H_0^{-1}$. Then $\|A\| \leq \|H_x-H_0\| \|H_0^{-1}\|
< 1$.  Therefore $\|(I-A)^{-1}\| = \|I+A+A^2+\cdots\| \leq 1 + \|A\|
+ \|A\|^2 + \cdots = (1-\|A\|)^{-1}$. Moreover,
$\|H_x^{-1} \bar H_x - I\| = \|H_0^{-1} (I-A)^{-1} (\bar H_x -
H_x)\| \leq \|H_0^{-1}\| (1-\|A\|)^{-1} \|\bar H_x - H_x\|$.
Finally, note $(1-\|A\|)^{-1} \leq (1-\|H_x-H_0\|\|H_0^{-1}\|)^{-1}$.
\end{proof}

A bound on the third-order derivative yields a Lipschitz constant
for the Hessian.

\begin{proposition}
\label{pr:acp}
Define $h$ and $H_x$ as in Lemma~\ref{lem:barH}.  Let $I$ be a
star-shaped region about the origin.  Let $\alpha \in \reals$ be
such that $\|H_x - H_y\| \leq \alpha \|x-y\|$ for $x,y \in I$.  Let
$\rho = (2\alpha \|H_0^{-1}\|)^{-1}$.  If $x \in I$ and $\|x\| \leq
\rho$ then $x$ is an approximate critical point.
\end{proposition}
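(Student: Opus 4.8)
The plan is to reduce the statement to Lemma~\ref{lem:barH}, whose sufficient condition for $x$ to be an approximate critical point is $\|H_x^{-1}\bar H_x - I\| \le \tfrac12$. To control that quantity I would use Lemma~\ref{lem:nbound}, which bounds $\|H_x^{-1}\bar H_x - I\|$ in terms of $\|\bar H_x - H_x\|$ and $\|H_x - H_0\|$ as soon as $\|H_x - H_0\| < \|H_0^{-1}\|^{-1}$. The Lipschitz hypothesis on the Hessian then lets me estimate both of those. A preliminary observation is that, since $I$ is star-shaped about the origin and $x \in I$, the entire segment $\{tx : 0 \le t \le 1\}$ lies in $I$, so the inequality $\|H_p - H_q\| \le \alpha\|p-q\|$ is available for all $p,q$ on that segment; in particular both $0$ and $x$ are eligible.

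Carrying this out, first $\|H_x - H_0\| \le \alpha\|x\| \le \alpha\rho = \tfrac12\|H_0^{-1}\|^{-1} < \|H_0^{-1}\|^{-1}$, so Lemma~\ref{lem:nbound} applies. Next, writing $\bar H_x - H_x = \int_0^1 (H_{tx} - H_x)\,dt$ and using the Lipschitz bound termwise,
\[
\|\bar H_x - H_x\| \le \int_0^1 \alpha\,\|tx - x\|\,dt = \alpha\|x\|\int_0^1 (1-t)\,dt = \tfrac12\,\alpha\|x\|.
\]
Feeding these two estimates into Lemma~\ref{lem:nbound} gives
\[
\|H_x^{-1}\bar H_x - I\| \le \frac{\tfrac12\,\alpha\|x\|}{\|H_0^{-1}\|^{-1} - \alpha\|x\|}.
\]
Finally, set $c = \|H_0^{-1}\|^{-1}$. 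The right-hand side is increasing in $\alpha\|x\|$ on $[0,c)$, and the hypothesis $\|x\| \le \rho$ gives $\alpha\|x\| \le \alpha\rho = c/2$, so the right-hand side is at most $\tfrac{c/4}{c/2} = \tfrac12$. Lemma~\ref{lem:barH} then yields that $x$ is an approximate critical point.

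I do not expect a genuine obstacle here: all the analytic content is already packaged in Lemmas~\ref{lem:barH} and~\ref{lem:nbound}, and the argument is essentially bookkeeping. The two points that merit a little care are (i) correctly tracking the factor $\tfrac12$ in the averaged-Hessian estimate, which comes precisely from $\int_0^1 (1-t)\,dt = \tfrac12$, and (ii) invoking monotonicity of the bound in $\alpha\|x\|$ so that the extreme case $\|x\| = \rho$ may be substituted to obtain exactly $\tfrac12$ — this is why the constant in the definition of $\rho$ is chosen as it is.
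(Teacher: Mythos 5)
Your proof is correct and follows exactly the route the paper takes: bound $\|\bar H_x - H_x\|$ and $\|H_x - H_0\|$ via the Lipschitz hypothesis, feed them into Lemma~\ref{lem:nbound}, and conclude with Lemma~\ref{lem:barH}. The only cosmetic difference is that you keep $\alpha\|x\|$ symbolic and invoke monotonicity, whereas the paper substitutes $\|x\| \le \rho$ immediately to obtain the numeric constants; these are the same calculation.
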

\begin{proof}
First, $\|\bar H_x - H_x\| \leq \int_0^1 \|H_{tx}-H_x\|\,dt \leq
\alpha \|x\| \int_0^1 1-t\,dt = \frac\alpha2\|x\|$.
Also, $\|H_x - H_0\| \leq \alpha \|x\| \leq \frac12 \|H_0^{-1}\|^{-1}$.  
Lemma~\ref{lem:nbound} implies
$\| H_x^{-1} \bar H_x - I \| \leq \frac{(4\|H_0^{-1}\|)^{-1}}
{\|H_0^{-1}\|^{-1} - (2\|H_0^{-1}\|)^{-1}}$.
The result now follows from Lemma~\ref{lem:barH}.
\end{proof}

\section{A Homotopy-based Algorithm for Optimisation}
\label{sec:halg}

This section outlines how a homotopy-based algorithm can solve
fibre bundle optimisation problems efficiently.

Homotopy-based algorithms have a long history~\cite{bk:Allgower:num_cont}.
Attention has mainly focused on one-time problems where little use
can be made of results such as Proposition~\ref{pr:acp} requiring
the prior calculation of various bounds on derivatives and locations
of critical points.  Time spent on prior calculations is better
spent on solving the one-time problem directly.  The reverse is
true for real-time algorithms.  The more calculations performed
offline, the more efficient the real-time algorithm can be made,
up until when onboard memory becomes a limiting factor.

Definition~\ref{def:nice} may make it appear that nice optimisation
problems are not necessarily that nice if the sets $\Theta_i$ are
complicated.  However, it is always straightforward to find
fibre-wise critical points by path following.  The worst that can
happen if the $\Theta_i$ are complicated is that the algorithm may
need to follow more than one path because it cannot be sure which
path contains the sought after global minimum.

\begin{proposition}
\label{pr:track}
With notation as in Lemma~\ref{lem:N}, let $\gamma: [0,1] \rightarrow
\Theta$ be a smooth path.  Let $p \in N \cap \pi^{-1}(\gamma(0))$.
Then $\gamma$ lifts to a unique smooth path $\tilde\gamma: [0,1]
\rightarrow N$ such that $\tilde\gamma(0)=p$ and $\pi(\tilde\gamma(t))
= \gamma(t)$ for $t \in [0,1]$.
\end{proposition}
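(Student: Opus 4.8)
The plan is to reduce the statement to the fact that $\restrict{\pi}{N}\colon N\to\Theta$ is a local diffeomorphism, and then lift $\gamma$ by continuation along $[0,1]$. First I would verify local invertibility. Fix $p\in N$ and set $\theta=\pi(p)$. By Lemma~\ref{lem:N}, $N$ meets $\pi^{-1}(\theta)$ transversally at $p$, that is, $T_pN+T_p\pi^{-1}(\theta)=T_pM$; since $T_p\pi^{-1}(\theta)=V_pM=\ker d\pi_p$, this says that $d\pi_p$ restricted to $T_pN$ maps onto $T_\theta\Theta$. Because $\dim N=\dim\Theta$ (again Lemma~\ref{lem:N}), this restriction is a linear isomorphism, so by the inverse function theorem $\restrict{\pi}{N}$ carries a neighbourhood of $p$ in $N$ diffeomorphically onto a neighbourhood of $\theta$ in $\Theta$.

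Next I would use compactness to upgrade this to a global lifting property. The set $N$ is closed in $M$, since $p\in N$ is the closed condition that $df_p$ annihilate the smoothly varying vertical subspace $V_pM$; hence $N$ is compact by the standing assumption on $M$, and $\restrict{\pi}{N}$ is proper. It is also surjective, because $f$ attains a minimum on each compact fibre. A proper surjective local diffeomorphism is a covering map, so the path-lifting property of covering spaces already yields the desired unique lift $\tilde\gamma$ with $\tilde\gamma(0)=p$. I would prefer, however, to argue directly: let $A\subseteq[0,1]$ be the set of $t_0$ for which a lift exists on $[0,t_0]$. Local invertibility of $\restrict{\pi}{N}$ shows $A$ is relatively open (extend by the local inverse, $\tilde\gamma(t)=(\restrict{\pi}{N})^{-1}(\gamma(t))$, on a small chart around $\tilde\gamma(t_0)$), and that any two lifts agreeing at one parameter value agree on the entire connected interval where both are defined, which gives uniqueness. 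Compactness of $N$ closes $A$ at the top: if $t_k$ increases to $\sup A$, a subsequence of $\tilde\gamma(t_k)$ converges to some $q\in N$, necessarily with $\pi(q)=\gamma(\sup A)$, and local invertibility near $q$ extends the lift to $\sup A$; combined with relative openness this forces $A=[0,1]$. Smoothness of $\tilde\gamma$ is then immediate, since near each parameter $\tilde\gamma=(\restrict{\pi}{N})^{-1}\circ\gamma$, a composition of smooth maps.

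The only substantive step is the passage from a local diffeomorphism to a genuine path-lifting property: a local diffeomorphism by itself need not admit path lifts (think of an open arc mapping into $S^1$), and it is exactly the compactness of $N$, playing the role of properness, that prevents the lifted path from leaving every coordinate patch or failing to converge at the end of its maximal interval of definition. Everything else is bookkeeping: the transversality and dimension count are quoted verbatim from Lemma~\ref{lem:N}, while uniqueness and smoothness fall out of local invertibility.
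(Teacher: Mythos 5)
Your proof is correct and fills in, carefully, the argument the paper compresses into a one-line cross-reference to Lemma~\ref{lem:N} and the proof of Proposition~\ref{prop:nice}: the transversality and dimension count from Lemma~\ref{lem:N} make $\restrict{\pi}{N}$ a local diffeomorphism (equivalently, the local sections $s_i^{(\theta)}$ used in Proposition~\ref{prop:nice}), and compactness of $N$ supplies the global lifting. You have also correctly flagged the one genuinely non-trivial point, namely that a local diffeomorphism alone does not give path lifting and that compactness (properness) is what prevents the maximal lift from escaping at $\sup A$.
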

\begin{proof}
Follows from Lemma~\ref{lem:N} in a similar way
Proposition~\ref{prop:nice} did.
\end{proof}

\begin{corollary}
With notation as in Lemma~\ref{lem:N}, the number of points in the
set $N \cap \pi^{-1}(\theta)$ is constant for all $\theta \in
\Theta$.
\end{corollary}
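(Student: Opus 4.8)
The plan is to use the path-lifting property established in Proposition~\ref{pr:track} to build an explicit bijection between $N \cap \pi^{-1}(\theta_0)$ and $N \cap \pi^{-1}(\theta_1)$ whenever $\theta_0$ and $\theta_1$ can be joined by a smooth path in $\Theta$; since a connected manifold admits a smooth path between any two of its points (and otherwise one argues separately on each connected component), this yields the claimed constancy. I would begin by recording that each set $N \cap \pi^{-1}(\theta)$ is finite: by Lemma~\ref{lem:N}, $N$ is a submanifold meeting the fibre $\pi^{-1}(\theta)$ transversally inside the compact manifold $M$, and $\dim N + \dim \pi^{-1}(\theta) = \dim M$, so the intersection is a compact $0$-manifold.

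Next I would fix $\theta_0,\theta_1 \in \Theta$ and a smooth path $\gamma:[0,1]\to\Theta$ with $\gamma(0)=\theta_0$, $\gamma(1)=\theta_1$. For each $p \in N \cap \pi^{-1}(\theta_0)$, Proposition~\ref{pr:track} gives a unique smooth lift $\tilde\gamma_p:[0,1]\to N$ with $\tilde\gamma_p(0)=p$ and $\pi\circ\tilde\gamma_p=\gamma$; set $\Phi(p)=\tilde\gamma_p(1)$, which lies in $N$ and projects to $\theta_1$. Symmetrically, using lifts of the reversed path $\bar\gamma(t)=\gamma(1-t)$, I would define $\Psi: N \cap \pi^{-1}(\theta_1)\to N \cap \pi^{-1}(\theta_0)$.

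The core of the argument is to verify that $\Psi$ is a two-sided inverse of $\Phi$. Given $p \in N \cap \pi^{-1}(\theta_0)$, the reparametrised curve $t\mapsto\tilde\gamma_p(1-t)$ is a smooth lift of $\bar\gamma$ starting at $\Phi(p)$, so by the uniqueness clause of Proposition~\ref{pr:track} it is precisely the lift defining $\Psi(\Phi(p))$, whence $\Psi(\Phi(p))=\tilde\gamma_p(0)=p$; exchanging the roles of $\gamma$ and $\bar\gamma$ gives $\Phi(\Psi(p'))=p'$. Hence $\Phi$ is a bijection and $|N \cap \pi^{-1}(\theta_0)|=|N \cap \pi^{-1}(\theta_1)|$. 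Equivalently, transversality in Lemma~\ref{lem:N} forces $\pi|_N$ to be a local diffeomorphism, hence — being proper, as $N$ is compact — a covering map onto (a union of components of) $\Theta$, so its fibre cardinality is locally constant; Proposition~\ref{pr:track} is exactly the path-lifting incarnation of this fact.

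The main obstacle is essentially bookkeeping rather than substance: one must apply the uniqueness of lifts with the correct endpoints so that the inversion identities hold, and, absent an explicit connectedness hypothesis on $\Theta$, one should read ``constant for all $\theta$'' as constancy on each connected component. All the analytic content is already packaged in Lemma~\ref{lem:N} and Proposition~\ref{pr:track}, so no further estimates are needed.
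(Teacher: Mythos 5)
Your proposal is correct and takes essentially the same route the paper intends: the corollary is stated without proof as an immediate consequence of Proposition~\ref{pr:track}, and your path-lifting bijection (with the two-sided inverse supplied by lifting the reversed path and invoking uniqueness of lifts) is precisely the standard fleshing-out of that implication. The covering-map reformulation of $\pi|_N$ and your caveat that ``constant'' should be read per connected component of $\Theta$ are both sound and add useful precision without changing the substance.
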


Different paths with the same end points can have different lifts.
Nevertheless, as the number of fibre-wise critical points is constant
per fibre, as soon as the fibre-wise critical points on one fibre
are known, the fibre-wise critical points on any other fibre can
be found by following any path from one fibre to another.  Furthermore,
only paths containing local minima need be followed to find a global
minimum.

\begin{proposition}
\label{pr:type}
With notation as in Lemma~\ref{lem:N}, let $p$ and $q$ lie on a
connected component of $N$.  Then $p$ is a fibre-wise local minimum
if and only if $q$ is a fibre-wise local minimum.  
\end{proposition}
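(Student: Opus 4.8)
The plan is to show that the fibre-wise Morse index is locally constant along $N$, which immediately gives the result since a fibre-wise local minimum is precisely a fibre-wise critical point of index zero (the index being the number of negative eigenvalues of the Hessian of $\restrict{f}{\pi^{-1}(\pi(p))}$, which has size $k = \dim M - \dim\Theta$). Because a connected component of $N$ is path-connected, it suffices to prove that the index cannot jump along $N$.

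First I would set up the local picture exactly as in the proof of Lemma~\ref{lem:N}: over an open set $U \subset \Theta$, choose a smooth local basis $s_1,\dots,s_k: \pi^{-1}(U) \rightarrow VM$ of the vertical bundle. For a fibre-wise critical point $p$ with $\pi(p) \in U$, the Hessian of $\restrict{f}{\pi^{-1}(\pi(p))}$ at $p$ is represented in this basis by the symmetric $k \times k$ matrix $A(p)$ with entries $A_{ij}(p) = s_i(s_j f)(p)$ (the second vertical derivatives; at a fibre-wise critical point the first vertical derivatives $df(s_j)$ vanish, so this is well-defined independently of the choice of $s_i$ up to congruence, hence the index is basis-independent). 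The key point is that $p \mapsto A(p)$ is continuous (indeed smooth) on $N \cap \pi^{-1}(U)$, and by the fibre-wise Morse hypothesis $A(p)$ is nonsingular for every such $p$. The number of negative eigenvalues of a continuous family of nonsingular symmetric matrices is locally constant, since eigenvalues depend continuously on the matrix and none can cross zero. Therefore the index is locally constant on $N$.

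Second, I would patch the local statement to a global one along a connected component $C$ of $N$. Given $p, q \in C$, choose a continuous path in $C$ joining them (possible since $N$ is a manifold, hence locally path-connected, and $C$ is connected); cover its image by finitely many of the open sets $\pi^{-1}(U)$ above; and conclude by a standard compactness/chaining argument that the index is constant along the path, hence $\operatorname{index}(p) = \operatorname{index}(q)$. In particular $\operatorname{index}(p) = 0 \iff \operatorname{index}(q) = 0$. Finally I would note that $\operatorname{index}(p) = 0$ together with nondegeneracy of $A(p)$ means $A(p)$ is positive definite, which is exactly the condition for $p$ to be a (nondegenerate, hence strict) fibre-wise local minimum of $\restrict{f}{\pi^{-1}(\pi(p))}$; conversely a fibre-wise local minimum at a non-degenerate fibre-wise critical point forces $A(p) \succeq 0$ and, being nonsingular, $A(p) \succ 0$, i.e. index zero.

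I do not expect a serious obstacle here; the only mild care needed is in verifying that the matrix $A(p)$ of vertical second derivatives transforms by congruence (not arbitrary similarity) under a change of vertical frame at a fibre-wise critical point, so that its inertia — and in particular its index and positive-definiteness — is a well-defined invariant of $p$. This follows because at a point where all $df(s_j)$ vanish, changing the frame $s_i \mapsto \sum_l P_{li} s_l$ replaces $A$ by $P^T A P$ plus terms involving $df(s_l) = 0$; Sylvester's law of inertia then does the rest. Everything else is the elementary fact that the signature of a nonsingular symmetric matrix is locally constant, combined with connectedness of the component $C$.
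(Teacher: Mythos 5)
Your proposal is correct and takes essentially the same approach as the paper: track the inertia of the fibre-wise Hessian along a path in $N$ and use the fibre-wise Morse hypothesis to rule out eigenvalue sign changes. The paper's proof is a two-sentence sketch of exactly this argument; yours supplies the supporting details (well-definedness of the index via Sylvester's law, local constancy of the signature of a continuous nonsingular symmetric family, and the chaining/compactness step) that the paper leaves implicit.
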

\begin{proof}
Fibre-wise, each critical point is assumed non-degenerate.  Therefore,
along a continuous path, the eigenvalues of the Hessian cannot
change sign and the index is preserved.
\end{proof}

Referring to Proposition~\ref{pr:type}, define $\tilde N \subset
N$ to be the connected components of $N$ corresponding to fibre-wise
local minima.

An outline of a homotopy-based algorithm for fibre bundle optimisation
problems can now be sketched.  It will be refined presently.  It
relies on several lookup tables, the first of which has entries
$(\theta, \pi^{-1}(\theta) \cap \tilde N)$ for $\theta \in
\{\theta_1,\cdots,\theta_n\} \subset \Theta$.  That is to say, the
set of all local minima of $f$ restricted to the fibres over
$\theta_1,\cdots,\theta_n$, have been determined in advance.

\begin{enumerate}
\item Given $\theta$ as input, determine an appropriate starting
point $\theta_i$ from the finite set $\{\theta_1,\cdots,\theta_n\}$.

\item Determine an appropriate path $\gamma$ from $\theta_i$ to $\theta$.

\item Track each fibre-wise critical point $p \in \pi^{-1}(\theta_i)
\cap \tilde N$ along the path $\gamma$ (i.e., numerically compute
the lift $\tilde\gamma$ defined in Proposition~\ref{pr:track}).

\item Evaluate the cost function $f$ at the fibre-wise local minima on
the fibre $\pi^{-1}(\theta)$ to determine which are global minima.
Return one or all of the global minima.
\end{enumerate}

Step 3 can be accomplished with a standard path-following
scheme~\cite{bk:Allgower:num_cont}. A refinement is to utilise
Proposition~\ref{pr:acp}, as now explained.  Using a suitably chosen
local coordinate chart, the cost function $f$ restricted to a
sufficiently small segment of the path $\gamma$ can be represented
locally by a function $h: \reals^n \times \reals \rightarrow \reals$.
Here, $h$ should be thought of as a parametrised cost function,
with $h(\cdot;0)$ the starting function having a non-degenerate
critical point at the origin, and the objective being to track that
critical point all the way to the cost function $h(\cdot;1)$.  An
\emph{a priori} bound on the location of the critical point of
$h(\cdot;t)$ is readily available; see for example~\cite[Chapter
16]{bk:Hirsch:diff}.  Similarly, Proposition~\ref{pr:acp} gives a
bound on how far away from the critical point the initial point can
be whilst ensuring the Newton method converges rapidly.  Therefore,
these two bounds enable the determination of the largest value of
$t \in [0,1]$ such that, starting at the origin, the Newton method
is guaranteed to converge rapidly to the critical point of $h(\cdot;t)$.
Once that critical point has been found, a new local chart can be
chosen and the process repeated.

These same bounds, which are pre-computed and stored in lookup
tables, permit the determination of the number of Newton steps
required to get sufficiently close to the critical point. For
intermediate points along the path, it is not necessary for the
critical points to be found accurately. Provided the algorithm
stays within the bound determined by Proposition~\ref{pr:acp}, the
correct path is guaranteed of being followed.

The fact that $M$ may be a manifold presents no conceptual difficulty.
As in~\cite{Manton:opt_mfold}, it suffices to work in local
coordinates, and change charts as necessary, as already mentioned
earlier.

Steps 1 and 2 of the algorithm pose three questions.  How should the
set $\{\theta_1,\cdots,\theta_n\}$ be chosen, how should a particular
$\theta_i$ be selected based on $\theta$, and what path should be
chosen for moving from $\theta_i$ to $\theta$?  Importantly, the algorithm
will work regardless of what choices are made.  Nevertheless,
expedient choices can significantly enhance the efficiency of the algorithm.

Another refinement is to limit in Step 3 the number of paths that are
followed. Proposition~\ref{prop:nice} ensures that it is theoretically
possible to determine beforehand which path the global minimum 
will lie on.  Therefore, with the use of another lookup table,
the number of paths the algorithm must track can be reduced;
see Remark~\ref{rem:theta}.

\section{Conclusion}

A nascent theory of optimisation geometry was propounded for studying
real-time optimisation problems.  It was demonstrated that irrespective
of how difficult an individual cost function might be to optimise
offline, a simple and reliable homotopy-based algorithm can be used
for the real-time implementation.

Real-time optimisation problems were reformulated as fibre bundle
optimisation problems (Definition~\ref{def:fbop}).  The geometry
inherent in this fibre bundle formulation provides information about
the problem's intrinsic computational complexity. An advantage of
studying the geometry is it prevents any particular choice of
coordinates from dominating, so there is a possibility of seeing
through obfuscations caused by the chosen formulation of the problem.

That geometry helps reveal the true complexity of an optimisation
problem can be demonstrated by referring back to the discussion of
the fibre bundle optimisation problem on the torus in
Section~\ref{sec:torus}.  Irrespective of how complicated the
individual cost functions are (but with the proviso that they be
fibre-wise Morse), the fibre-wise critical points will lie on a
finite number of circles that wind around the torus, and because
these circles cannot cross each other, or become tangent to a fibre,
they each wind around the torus the same number of times.  Therefore,
in terms of where the fibre-wise critical points lie, the intrinsic
complexity is encoded by just two integers: the number of circles,
and the number of times each circle intersects a fibre.

Although this article lacked the opportunity to explore this aspect,
a crucial observation is even though it may appear that some
problems are more complicated than others because the paths of
fibre-wise critical points locally ``fluctuate'' more, a smooth
transformation can be applied to iron out these fluctuations.
Smooth transformations
cannot change the \emph{intrinsic complexity} whereas they can, by
definition, eliminate \emph{extrinsic complexity}.

The second determining aspect of complexity is the number
of times the fibre-wise minimum jumps from one circle to another.
This is precisely what is counted by the topological complexity,
mentioned just after Definition~\ref{def:nice}.

For higher dimensional problems, attention can always be restricted
to compact one-dimensional submanifolds of the parameter space
$\Theta$, in which case the situation is essentially the same as
for the torus; see Remark~\ref{rem:restrict}.  The only difference
is the circles may become intertwined.  The theory of links and
braids may play a role in further investigations, for if two circles
are linked then no smooth transformation can separate them.

Another potentially interesting direction for further work is to
explore the possibility of replacing a family of cost functions
with an equivalent family which is computationally simpler to work
with but which gives the same answer.

There are myriad other opportunities for refinements and extensions.
The theory presented in this article was the first that came to
mind and may well be far from optimal.

\section*{Acknowledgments}

Not only has Uwe brought happiness into my personal life with his
genuine friendship and good humour, Uwe has played a pivotal role
in my academic life.  It is with all the more pleasure and sincerity
then that I dedicate this article to Uwe Helmke on the occasion of
his 60th birthday.

This work was supported in part by the Australian Research Council.

}

\nocite{*}
\bibliographystyle{abbrv}
\bibliography{refs}

\end{document}